\def\@@and{и}
\def\@setcopyright{\@empty}
\newcommand{\E}{E_n(f)_{p,\alpha}}
\newcommand{\Epar}[2]{E_{#1}\left(#2\right)_{p,\alpha}}
\newcommand{\T}[3]{\tau_{#1}^{#2}\left(#3,\mu\right)}
\newcommand{\hatT}[3]{\hat\tau_{#1}^{#2}\left(#3,\mu\right)}
\newcommand{\w}{\hat\omega(f,\delta,\mu)_{p,\alpha}}
\newcommand{\wpar}[1]
	{\hat\omega\left(#1,\mu\right)_{p,\alpha}}
\newcommand{\K}{K(f,\delta,\mu)_{p,\alpha}}
\newcommand{\Kpar}[1]{K\left(#1,\mu\right)_{p,\alpha}}
\newcommand{\norm}[1]{\left\|#1\right\|_{p,\alpha}}
\newcommand{\Si}[1]{(1-#1^2)}
\newcommand{\Co}[1]{(\cos #1/2)^{2\mu}}
\newcommand{\cosmu}{\cos\mu(\varphi_1-\varphi)}
\newcommand{\Dx}{D_{x,\mu,\mu}}
\newcommand{\Dy}{D_{y,0,2\mu}}
\newcommand{\AD}{AD(p,\alpha,\mu)}
\newcommand{\Px}[1]{P_{#1}^{(\mu,\mu)}}
\newcommand{\Py}{P_n^{(0,2\mu)}(y)}
\newcommand{\kap}{\varkappa(\delta)}
\newcommand{\sincosv}{(\sin v/2)^{-1}(\cos v/2)^{-4\mu-1}}
\newcommand{\sincosu}{(\sin u/2)(\cos u/2)^{4\mu+1}}
\newcommand{\varsincosu}{(\sin u/2)(\cos u/2)^{2\mu+1}}
\newcommand{\krn}{%
  \left(\frac{\sin\frac{mt}2}{\sin\frac t2}\right)^{2q+4}
	(\sin t)^{2\mu+1}}
\newcommand{\Lp}{L_{p,\alpha}}
\newcommand{\Lmu}{L_{1,\mu}}
\newcommand{\gd}{g_\delta(x)}
\newcommand{\allp}{1\le p\le\infty}
\newcommand{\allmu}{\mu\in\numN\cup\{0\}}
\newcommand{\numericset}[1]{\mathbb #1}
\newcommand{\numN}{\numericset N}
\newtheorem{thm}{Теорема}[section]
\newtheorem{lmm}{Лемма}[section]
\newtheorem*{cor}{Следствие}
\newcounter{const}
\numberwithin{const}{thm}
\numberwithin{const}{lmm}
\newcommand{\Cn}[1][]{%
  \stepcounter{const}C_{\theconst}%
  \@ifnotempty{#1}{\newcounter{#1}\setcounter{#1}{\arabic{const}}}}
\newcommand{\lastC}{C_{\theconst}}
\newcommand{\prevC}[1][1]{%
	{\countdef\n=255
	 \n=\theconst
	 \advance\n by-#1
	 C_{\number\n}}}
\renewcommand{\theconst}{\arabic{const}}
\begin{document}

\title[О теореме Джексона\dots]{О теореме Джексона для
	модуля гладкости, определяемого несимметричным оператором
	обобщенного сдвига}
\author{M.~K.\ Потапов}
\address{M.~K.\ Потапов\\
	Механико-математический факультет\\
	Московский Государственный Университет им.\ Ломоносова\\
	Москва 117234\\
	Россия}
\author{Ф.~M.\ Бериша}
\address{Ф.~M.\ Бериша\\
	Механико-математический факультет\\
	Московский Государственный Университет им.\ Ломоносова\\
	Москва 117234\\
	Россия}
\curraddr{F.~M.\ Berisha\\
	Faculty of Mathematics and Sciences\\
	University of Prishtina\\
	N\"ena Terez\"e~5\\
	10000 Prishtina\\
	Kosovo}
	\email{faton.berisha@uni-pr.edu}

\thanks{Работа выполнена при поддержке Российского Фонда
	Фундаментальных Исследования (грант No.\ 96--01--00094)
	и программи поддержки ведущих научних школ
	(грант No.\ 96/97--15--96073).}

\keywords{%
	\foreignlanguage{english}{%
		Generalised modulus of smoothness,
		asymmetric operator of generalised translation,
		Jackson theorem,
		converse theorem,
		best approximations by algebraic polynomials%
	}%
}
\subjclass{Primary 41A35, Secondary 41A50, 42A16. (UDK 517.5.)}
\date{}

\begin{abstract}
	В этой работе
	рассматривается класс несимметричных операторов обобщенного сдвига,
	для каждого из них
	вводится обобщенные модули гладкости
	и для них доказывается теорема Джексона
	и теорема, обратная ей.
\end{abstract}

\maketitle

\begin{otherlanguage}{english}
	\begin{abstract}
		In this paper
		a class of asymmetrical operators of generalised translation
		is introduced,
		for each of them
		generalised moduli of smoothness are introduced,
		and Jackson's and its converse theorems
		are proved for those moduli.
	\end{abstract}
\end{otherlanguage}

\section{Введение}

В работе~\cite{potapov:vestnik-98}
был введен в рассмотрение
несимметричный оператор обобщенного сдвига,
с его помощью был определен обобщенный модуль гладкости
и для него доказана теорема о совпадении класса функций,
определяемого этим модулем,
с классом функций, имеющих данной порядок наилучшего приближения
алгебраическими многочленами.

В этой работе
рассматривается класс несимметричных операторов обобщенного сдвига,
для каждого из них вводится обобщенные модули гладкости
и для них доказывается теорема Джексона
и теорема, обратная ей.

\section{Определение обобщенного модуля гладкости}

Обозначим через~$L_p$, $1\le p<\infty$,
множество функций~$f$,
измеримых по Лебегу и суммируемых в $p$-й степень
на отрезке $[-1,1]$,
а через~$L_\infty$ обозначим множество функций,
непрерывных на отрезке $[-1,1]$,
причем
\begin{displaymath}
	\|f\|_p=
	\begin{cases}
		\bigl(\int_{-1}^1|f(x)|^p\,dx\bigr)^{1/p},
		  &\text{если $1\le p<\infty$},\\
		\max_{-1\le x\le1}|f(x)|,
		  &\text{если $p=\infty$}.
	\end{cases}
\end{displaymath}

Через~$\Lp$ обозначим множество функций~$f$,
таких, что $f(x)\*\Si{x}^\alpha\in L_p$,
причем
\begin{displaymath}
	\norm f=\|f(x)\Si{x}^\alpha\|_p.
\end{displaymath}

Через~$\E$ обозначим наилучшее приближение функций~$f$
при помощи алгебраических многочленов степени не выше, чем $n-1$,
в метрике~$\Lp$,
т.е.
\begin{displaymath}
	\E=\inf_{P_n}\norm{f-P_n},
\end{displaymath}
где~$P_n$ --- алгебраический многочлен степени не выше,
чем $n-1$.

Пусть $\allmu$.
Для суммируемой функции~$f$
введем оператор обобщенного сдвига по правилу
\begin{multline*}
	\hatT t{}{f,x}=\frac1{\pi\Si{x}^{\mu/2}\Co t}\\
	\times\int_0^\pi\Si{R}^{\mu/2}f(R)\cosmu\,d\varphi_1,
\end{multline*}
где
\begin{equation}\label{eq:R-B}
	\begin{gathered}
	x=\cos\theta_1, \quad y=\cos t, \quad z=\cos\varphi_1,\\
	R=xy-z\sqrt{1-x^2}\sqrt{1-y^2}=\cos\theta,\\
	\sin\theta\cos\varphi=\cos\theta_1\sin t
		+\sin\theta_1\cos t\cos\varphi_1,\\
	\sin\theta\sin\varphi=\sin\theta_1\sin\varphi_1.
	\end{gathered}
\end{equation}

При помощи этого оператора обобщенного сдвига
определим обобщенный модуль гладкости
по правилу
\begin{displaymath}
	\w=\sup_{|t|\le\delta}\norm{\hatT t{}{f,x}-f(x)}.
\end{displaymath}

Полагая $y=\cos t$, $z=\cos\varphi_1$
в операторе $\hatT t{}{f,x}$,
обозначим его через $\T y{}{f,x}$
и перепишем в виде
\begin{multline*}
	\T y{}{f,x}=\frac{2^\mu}{\pi\Si{x}^{\mu/2}(1+y)^\mu}\\
		\times\int_{-1}^1\Si{R}^{\mu/2}f(R)\cosmu\frac{dz}{\sqrt{1-z^2}},
\end{multline*}
где~$R$, $\varphi_1$ и~$\varphi$
определены формулами~\eqref{eq:R-B}.

Обозначим через~$D_{x,\nu,\mu}$ операторы дифференцирования
определяемые по правилу
\begin{displaymath}
	D_{x,\nu,\mu}=\Si{x}\frac{d^2}{dx^2}
		+(\mu-\nu-(\nu+\mu+2)x)\frac d{dx}.
\end{displaymath}
Очевидно, что
\begin{displaymath}
	D_{x,\nu,\mu}=(1-x)^{-\nu}(1+x)^{-\mu}\frac d{dx}
	(1-x)^{\nu+1}(1+x)^{\mu+1}\frac d{dx}.
\end{displaymath}

Будем писать, что $f(x)\in\AD$,
если $f\in\Lp$,
$f(x)$ имеет абсолютно непрерывную производную
на каждом отрезке $[a,b]\subset(-1,1)$
и $\Dx f(x)\in\Lp$.

Пусть
\begin{displaymath}
	\K=\inf_{g\in\AD}\bigl(\norm{f-g}+\delta^2\norm{\Dx g(x)}\bigr)
\end{displaymath}
$K$--функционал типа Петре,
интерполирующий между пространствами~$\Lp$ и $\AD$.

Будем обозначать через~$P_n^{(\nu,\mu)}(x)$ $(n=0,1,\dotsc)$
многочлены Якоби,
т.е. многочлены степени~$n$,
ортогональные друг другу с весом $(1-x)^\nu\*(1+x)^\mu$
на отрезке $[-1,1]$
и нормированные условием $P_n^{(\nu,\mu)}(1)=1$ $(n=0,1,\dotsc)$.

Пусть $\allmu$.
Через~$a_n(f)$ обозначим коэффициенты Фурье--Якоби
функции $f\in\Lmu$
по системе многочленов Якоби
$\left\{\Px n(x)\right\}_{n=0}^\infty$,
т.е.
\begin{displaymath}
	a_n(f)=\int_{-1}^1f(x)\Px n(x)\Si{x}^\mu\,dx
	\quad
	(n=0,1,\dots).
\end{displaymath}

Определим следующие симметричные операторы обобщенного сдвига,
играющие в дальнейшем вспомогательную роль
\begin{displaymath}
	T_y(f,x,\mu)
	=\frac1{\gamma_\mu}\int_{-1}^1\Si{z}^{\mu-1/2}f(R)\,dz,
\end{displaymath}
где
\begin{displaymath}
	\gamma_\mu=\int_{-1}^1\Si{z}^{\mu-1/2}\,dz,
\end{displaymath}
$\allmu$, $R$ --- определено формулами~\eqref{eq:R-B}.

Целью этой статьи является доказательство следующего утверждения.

\begin{thm}\label{th:jackson}
	Пусть даны числа~$p$, $\mu$ и~$\alpha$ такие,
	что $\allp$, $\allmu$;
	\begin{alignat*}2
		-\frac12 	&<\alpha-\frac\mu2\le0
			&\quad &\text{при $p=1$},\\
		-\frac1{2p} &<\alpha-\frac\mu2<\frac12-\frac1{2p}
			&\quad &\text{при $1<p<\infty$},\\
		0 			&\le\alpha-\frac\mu2<\frac12
			&\quad &\text{при $p=\infty$}.
	\end{alignat*}
	Пусть $f\in\Lp$.
	Тогда для любого натурального~$n$
	справедливы неравенства
	\begin{displaymath}
		\Cn\E\le\wpar{f,\frac1n}
		\le\Cn\frac1{n^2}\sum_{\nu=1}^n\nu\Epar\nu f,
	\end{displaymath}
	где положительные постоянные~$\prevC$ и~$\lastC$
	не зависят от~$f$ и~$n$.
\end{thm}

Заметим,
что при $\mu=0$
теорема доказана в работе~\cite{potapov:vestnik-83}.
При $\mu=2$
теорема доказана в работе~\cite{p-berisha:anal-99}.

\section{Вспомогательные утверждения}

\begin{lmm}\label{lm:properties-tau}
	Оператор~$\T y{}{f,x}$ обладает следующими свойствами:
	\begin{enumerate}
	\item\label{it:properties-tau-1}
		Оператор~$\T y{}{f,x}$
		линеен по~$f$;
	\item\label{it:properties-tau-2}
		$\T1{}{f,x}=f(x)$;
	\item\label{it:properties-tau-3}
		$\T y{}{\Px n,x}=\Px n(x)\Py
		\quad
		(n=0,1,\dotsc)$;
	\item\label{it:properties-tau-4}
		$\T y{}{1,x}=1$;
	\item\label{it:properties-tau-5}
		Если $g(x)\T y{}{f,x}\in\Lmu$ для любого~$y$,
		то
		\begin{displaymath}
			\int_{-1}^1f(x)\T y{}{g,x}\Si{x}^\mu\,dx
			=\int_{-1}^1g(x)\T y{}{f,x}\Si{x}^\mu\,dx;
		\end{displaymath}
	\item\label{it:properties-tau-6}
		$a_n(\T y{}{f,x})=a_n(f)\Py
		\quad
		(n=0,1,\dotsc)$.
	\end{enumerate}
\end{lmm}

\begin{proof}
	Свойства~\ref{it:properties-tau-1}
	и~\ref{it:properties-tau-2}
	следуют сразу из определения оператора $\T y{}{f,x}$.
	
	Для доказательства свойства~\ref{it:properties-tau-3}
	рассмотрим функции
	\begin{displaymath}
	P_{\mu\nu}^l(z)=P_n^{(\alpha,\beta)}(z)2^{-\mu}i^{\mu-\nu}
		\sqrt{\frac{(l-\mu)!\,(l+\mu)!}{(l-\nu)!\,(l+\nu)!}}
		(1-z)^{\frac{\mu-\nu}2}(1+z)^{\frac{\mu+\nu}2},
	\end{displaymath}
	где $l=n+\frac{\alpha+\beta}2$,
	$\mu=\frac{\alpha+\beta}2$,
	$\nu=\frac{\beta-\alpha}2$.
	Полагая $k=\mu$, $\nu=0$
	в формуле умножения~\cite[с.140]{vilenkin:spetsiyalnye}
	для функции~$P_{\mu\nu}^l(z)$,
	получим равенство
	из свойства~\ref{it:properties-tau-3}.
	
	Свойство~\ref{it:properties-tau-4}
	вытекает из свойства~\ref{it:properties-tau-3}
	при $n=0$.
	
	Докажем теперь свойство~\ref{it:properties-tau-5}.
	Имеем
	\begin{multline*}
		I_1=\int_{-1}^1f(x)\T y{}{g,x}\Si{x}^\mu\,dx\\
		=\frac{2^\mu}{\pi(1+y)^\mu}\int_{-1}^1\int_{-1}^1
			f(x)g(R)\Si{R}^{\mu/2}\Si{x}^{\mu/2}\\
			\times\cosmu\frac{dz\,dx}{\sqrt{1-z^2}},
	\end{multline*}
	где~$R$, $\varphi_1$ и~$\varphi$
	определены формулами~\eqref{eq:R-B}.
	Сделав в этом двойном интеграле
	замену переменных по формулам
	\begin{align*}
		x &=Ry+V\sqrt{1-R^2}\sqrt{1-y^2},\\
		z &=-\frac{R\sqrt{1-y^2}-Vy\sqrt{1-R^2}}
			{\sqrt{1-\left(Ry+V\sqrt{1-R^2}\sqrt{1-y^2}\right)^2}},
	\end{align*}
	получим, что
	\begin{displaymath}
		I_1=\int_{-1}^1g(R)\T y{}{f,R}\Si{R}^\mu\,dR,
	\end{displaymath}
	что и требовалось доказать.
	
	Для доказательства свойства~\ref{it:properties-tau-6}
	рассмотрим
	\begin{displaymath}
		I_2=a_n(\T y{}{f,x})
		=\int_{-1}^1\T y{}{f,x}\Px n(x)\Si{x}^\mu\,dx.
	\end{displaymath}
	Используя свойства~\ref{it:properties-tau-5}
	и~\ref{it:properties-tau-3},
	получаем, что
	\begin{multline*}
		I_2=\int_{-1}^1f(x)\T y{}{\Px n,x}\Si{x}^\mu\,dx\\
		=\Py\int_{-1}^1f(x)\Px n(x)\Si{x}^\mu\,dx.
	\end{multline*}
	
	Лемма~\ref{lm:properties-tau} доказана.
\end{proof}

\begin{lmm}\label{lm:inequality}
	Пусть даны числа~$p$ и~$\alpha$ такие,
	что $\allp$;
	\begin{alignat*}2
		\frac12 		&<\alpha\le1
			&\quad &\text{при $p=1$},\\
		1-\frac1{2p}	&<\alpha<\frac32-\frac1{2p}
			&\quad &\text{при $1<p<\infty$},\\
		1 				&\le\alpha<\frac32
			&\quad &\text{при $p=\infty$}.
	\end{alignat*}
	Пусть~$R$ определено формулами~\eqref{eq:R-B}.
	Тогда для любой измеримой на отрезке $[-1,1]$ функции~$f$
	справедливо неравенство
	\begin{displaymath}
		\left\|{\int_{-1}^1\Si R|f(R)|\frac{dz}{\sqrt{1-z^2}}}\right\|
			_{p,\alpha-1}
		\le C\norm f,
	\end{displaymath}
	где постоянная~$C$ не зависит от~$f$ и~$t$.
\end{lmm}

Лемма~\ref{lm:inequality}
дана в работе~\cite{p-berisha:anal-99}.

\begin{lmm}\label{lm:bound-tau}
	Пусть даны числа~$p$, $\mu$ и~$\alpha$ такие,
	что $\allp$,
	$\allmu$;
	\begin{alignat*}2
		-\frac12 	&<\alpha-\frac\mu2\le0
			&\quad &\text{при $p=1$},\\
		-\frac1{2p} &<\alpha-\frac\mu2<\frac12-\frac1{2p}
			&\quad &\text{при $1<p<\infty$},\\
		0			&\le\alpha-\frac\mu2<\frac12
			&\quad
				&\text{при $p=\infty$}.
	\end{alignat*}
	Пусть $f\in\Lp$.
	Тогда справедливо неравенство
	\begin{displaymath}
		\norm{\hatT t{}{f,x}}\le C\frac1{\Co t}\norm f,
	\end{displaymath}
	где постоянная~$C$ не зависит от~$f$, $t$ и~$\mu$.
\end{lmm}

\begin{proof}
	Пусть
	\begin{displaymath}
		I=\norm{\hatT t{}{f,x}}.
	\end{displaymath}
	Тогда
	\begin{multline*}
		I=\frac1{\pi\Co t}\\
			\times\norm{\frac1{\Si{x}^{\mu/2}}
			\int_{-1}^1\Si{R}^{\mu/2}f(R)\cosmu\frac{dz}{\sqrt{1-z^2}}},
	\end{multline*}
	где~$R$, $\varphi_1$ и~$\varphi$
	даны формулами~\eqref{eq:R-B}.
	Применяя лемму~\ref{lm:inequality},
	получаем, что
	\begin{displaymath}
		I\le\Cn\frac1{\Co t}\|\Si{x}^{\mu/2-1}|f(x)|\|_{p,\alpha+1-\mu/2}
		=\lastC\frac1{\Co t}\norm f.
	\end{displaymath}
	
	Лемма~\ref{lm:bound-tau} доказана.
\end{proof}

\begin{lmm}\label{lm:Dtau}
	Пусть функция~$f(x)$
	имеет на каждом отрезке $[a,b]\subset(-1,1)$
	абсолютно непрерывную производную~$f'(x)$,
	$\allmu$
	и пусть $\Dx f(x)\in\Lmu$.
	Тогда
	\begin{enumerate}
	\item\label{it:Dtau-1}
		При фиксированном~$y$
		функция $\T y{}{f,x}$
		имеет на каждом отрезке $[c,d]\subset(-1,1)$
		абсолютно непрерывную производную $\frac d{dx}\T y{}{f,x}$.
	\item\label{it:Dtau-2}
		При фиксированном~$x$
		функция $\T y{}{f,x}$
		имеет на каждом отрезке $[c,d]\subset(-1,1)$
		абсолютно непрерывную производную $\frac d{dy}\T y{}{f,x}$.
	\item\label{it:Dtau-3}
		Для почти всех~$x$ и~$y$
		справедливы равенства
		$$
		\T y{}{\Dx f,x}=\Dx\T y{}{f,x}=\Dy\T y{}{f,x}.
		$$
	\end{enumerate}
\end{lmm}

\begin{proof}
	Рассмотрим функцию
	\begin{displaymath}
		\varphi(x)
		=\frac{\Si{R}^{\mu/2}\cosmu}
			{\Si{x}^{\mu/2}(1+y)^2\sqrt{1-z^2}}f(R),
	\end{displaymath}
	где~$R$, $\varphi_1$ и~$\varphi$
	определены формулами~\eqref{eq:R-B}.
	Нетрудно доказать,
	что при фиксированных~$y$ и~$z$,
	либо $R$ --- монотонная функция от~$x$ на отрезке $[-1,1]$,
	либо, если существует конечно число $\theta_0=\arctg(z\tg t)$,
	$R$ --- монотонная функция от~$x$
	на каждом из отрезков $[-1,\cos\theta_0]$
	и $[\cos\theta_0,1]$.
	Поэтому заключаем,
	что фкнкция~$\varphi'(x)$ --- абсолютно непрерывна
	на каждом отрезке $[c,d]\subset(-1,1)$.
	Отсюда утверждение~\ref{it:Dtau-1}
	следует после применения теоремы Лебега
	о переходе пределом под знаком интеграла.
	
	Учитывая симметричность~$R$ по~$x$ и~$y$,
	аналогичным расуждением
	доказывается абсолютная непрерывность функции
	$\frac d{dy}\T y{}{f,x}$.
	
	Докажем теперь равенство
	\begin{equation}\label{eq:Dxtau}
		\T y{}{\Dx f,x}=\Dx\T y{}{f,x}.
	\end{equation}
	
	Из~\ref{it:Dtau-1} следует,
	что существует $\Dx\T y{}{f,x}$.
	
	Пусть вначале функция~$f(x)$ бесконечно дифференцируема
	и равна нулю вне некоторого отрезка
	$[a,b]\subset(-1,-y)\cup(-y,y)\cup(y,1)$.
	Применяя свойств~\ref{it:properties-tau-5}
	и~\ref{it:properties-tau-3}
	из леммы~\ref{lm:properties-tau},
	получаем
	\begin{multline*}
		I=\int_{-1}^1\T y{}{\Dx f,x}\Px n(x)\Si{x}^\mu\,dx\\
		=\Py\int_{-1}^1\Dx f(x)\Px n(x)\Si{x}^\mu\,dx.
	\end{multline*}
	Интегрируя дважды по частям,
	учитывая, что $f(x)=0$
	и $f'(x)=0$ вне $[a,b]\subset(-1,1)$,
	получим, что
	\begin{displaymath}
		I=\Py\int_{-1}^1\Dx\Px n(x)f(x)\Si{x}^\mu\,dx.
	\end{displaymath}
	Известно, что~\cite[с.171]{erdelyi-m-o-t:transcendental}
	\begin{displaymath}
		\Dx\Px n(x)=-n(n+2\mu+1)\Px n(x).
	\end{displaymath}
	Поэтому
	\begin{displaymath}
		I=-n(n+2\mu+1)\Py\int_{-1}^1f(x)\Px n(x)\Si{x}^\mu\,dx.
	\end{displaymath}
	Отсюда, применяя свойств~\ref{it:properties-tau-3}
	и~\ref{it:properties-tau-5} леммы~\ref{lm:properties-tau}
	и интегрируя дважды по частям,
	учитывая, что $\T y{}{f,x}=0$
	вне некоторого отрезка $[\gamma,\delta]\subset(-1,1)$,
	получаем, что
	\begin{displaymath}
		I=\int_{-1}^1\Dx\T y{}{f,x}\Px n(x)\Si{x}^\mu\,dx.
	\end{displaymath}
	Следовательно,
	при фиксированном~$y$ все коэффициенты Фурье--Якоби
	функции
	\begin{displaymath}
		F(x)=\T y{}{\Dx f,x}-\Dx\T y{}{f,x}
	\end{displaymath}
	по системе многочленов $\left\{\Px n(x)\right\}_{n=0}^\infty$
	равны нулю.
	Из свойства полноты системы
	$\left\{\Px n(x)\right\}_{n=0}^\infty$
	заключаем,
	что $F(x)=0$ почти всюду на $[-1,1]$.
	
	Тем самим равенство~\eqref{eq:Dxtau} доказано
	при фиксированном~$y$
	для функции~$f(x)$ бесконечно дифференцируемой и равной нулю
	вне некоторого отрезка
	$[a,b]\subset(-1,-y)\cup(-y,y)\cup(y,1)$.
	
	Пусть теперь функция $f(x)$
	имеет на каждом отрезке $[a,b]\subset(-1,1)$
	абсолютно непрерывную производную~$f'(x)$.
	Пусть функция~$g(x)$ бесконечно дифференцируема
	и равна нулю
	вне некоторого отрезка $[c,d]\subset(-1,-y)\cup(-y,y)\cup(y,1)$.
	Интегрируя дважды по частям,
	учитывая, что
	\begin{displaymath}
		g(x)\Si{x}^{\mu+1}\frac d{dx}\T y{}{f,x}\to0
	\end{displaymath}
	и
	\begin{displaymath}
		\T y{}{f,x}\Si{x}^{\mu+1}\frac d{dx}g(x)\to0
	\end{displaymath}
	для $x\to-1+0$ и $x\to1-0$,
	получим, что
	\begin{multline*}
		J_1=\int_{-1}^1\Dx\T y{}{f,x}g(x)\Si{x}^\mu\,dx\\
		=\int_{-1}^1\Dx g(x)\T y{}{f,x}\Si{x}^\mu\,dx.
	\end{multline*}
	Применяя лемму~\ref{lm:properties-tau},
	имеем
	\begin{displaymath}
		J_1=\int_{-1}^1f(x)\T y{}{\Dx g,x}\Si{x}^\mu\,dx.
	\end{displaymath}
	
	Пусть теперь
	\begin{displaymath}
		J_2=\int_{-1}^1\T y{}{\Dx f,x}g(x)\Si{x}^\mu\,dx.
	\end{displaymath}
	Аналогично применяя лемму~\ref{lm:properties-tau}
	и интегрируя дважды по частям,
	получаем, что
	\begin{displaymath}
		J_2=\int_{-1}^1\Dx\T y{}{g,x}f(x)\Si{x}^\mu\,dx.
	\end{displaymath}
	Следовательно
	\begin{displaymath}
		J_2-J_1=\int_{-1}^1\left(\Dx\T y{}{g,x}-\T y{}{\Dx g,x}\right)
		f(x)\Si{x}^\mu\,dx.
	\end{displaymath}
	Но для бесконечно дифференцируемой
	и равной нулю
	вне некоторого отрезка
	$[c,d]\subset(-1,-y)\cup(-y,y)\cup(y,1)$
	функции~$g(x)$
	доказано равенство~\eqref{eq:Dxtau}
	при фиксированном~$y$
	и почти всех $x\in[-1,1]$.
	Отсюда
	\begin{displaymath}
		J_2-J_1=\int_{-1}^1\left(\T y{}{\Dx f,x}-\Dx\T y{}{f,x}\right)
		g(x)\Si{x}^\mu\,dx=0
	\end{displaymath}
	для всех~$y$.
	Так как,
	отрезок
	$[c,d]\subset(-1,-y)\cup(-y,y)\cup(y,1)$
	выбран любым,
	а функция~$g(x)$ --- любая бесконечно дифференцируемая функция,
	равная нулю вне отрезка $[c,d]$,
	то равенство~\eqref{eq:Dxtau} справедливо
	почти всюду на $[-1,1]$ при фиксированном~$y$.
	
	Равенство
	\begin{displaymath}
		\T y{}{\Dx f,x}=\Dy\T y{}{f,x}
	\end{displaymath}
	доказывается аналогично.
	
	Лемма~\ref{lm:Dtau} доказана.
\end{proof}

\begin{lmm}\label{lm:tauuDx}
	Пусть функция~$f(x)$
	имеет на каждом отрезке
	$[a,b]\subset(-1,1)$
	абсолютно непрерывную производную~$f'(x)$.
	Тогда для почти всех $x\in[-1,1]$
	выполнены следующие равенства
	\begin{multline*}
		\T y{}{f,x}-f(x)=\int_1^y(1-v)^{-1}(1+v)^{-2\mu-1}\\
			\times\int_1^v(1+u)^{2\mu}\T u{}{\Dx f,x}\,du\,dv
	\end{multline*}
	и
	\begin{multline*}
		\T y{}{f,x}-\T 0{}{f,x}=-\int_0^y(1-v)^{-1}(1+v)^{-2\mu-1}\\
			\times\int_v^{-1}(1+u)^{2\mu}\T u{}{\Dx f,x}\,du\,dv.
	\end{multline*}
\end{lmm}

\begin{proof}
	Докажем первое равенство леммы.
	Если функция~$f(x)$ --- бесконечно дифференцируема
	и равна нулю вне некоторого отрезка
	$[a,b]\subset(-1,-y)\cup(-y,y)\cup(y,1)$,
	то, применяя леммы~\ref{lm:Dtau} и~\ref{lm:properties-tau},
	получаем, что
	\begin{multline*}
		\int_1^y(1-v)^{-1}(1+v)^{-2\mu-1}
			\int_1^v(1+u)^{2\mu}\T u{}{\Dx f,x}\,du\,dv\\
		=\int_1^y(1-v)^{-1}(1+v)^{-2\mu-1}
			\int_1^v(1+u)^{2\mu}D_{u,0,2\mu}\T u{}{f,x}\,du\,dv\\
		=\T y{}{f,x}-f(x)
	\end{multline*}
	при почти всех $x\in[-1,1]$.
	
	Пусть теперь~$f(x)$
	имеет на каждом отрезке $[a,b]\subset(-1,1)$
	абсолютно непрерывную производную~$f'(x)$.
	Пусть функция~$g(x)$ бесконечно дифференцируема
	и равна нулю вне некоторого отрезка
	$[c,d]\subset(-1,-y)\cup(-y,y)\cup(y,1)$.
	Тогда меняя порядок интегрирования,
	потом применяя лемму~\ref{lm:properties-tau}
	и, рассуждая аналогично
	как при доказательстве леммы~\ref{lm:Dtau},
	интегрируя дважды по частям,
	нетрудно доказать, что
	\begin{multline*}
		J=\int_{-1}^1\int_1^y(1-v)^{-1}(1+v)^{-2\mu-1}\\
			\times\int_1^v(1+u)^{2\mu}
			\T u{}{\Dx f,x}g(x)\Si{x}^\mu\,du\,dv\,dx\\
		=\int_{-1}^1f(x)\Si{x}^\mu\int_1^y(1-v)^{-1}(1+v)^{-2\mu-1}\\
			\times\int_1^v(1+u)^{2\mu}\Dx\T u{}{g,x}\,du\,dv\,dx.
	\end{multline*}
	Но для бесконечно дифференцируемой
	и равной нулю вне некоторого отрезка
	$[c,d]\subset(-1,-y)\cup(-y,y)\cup(y,1)$
	функции~$g(x)$
	уже доказано первое равенство леммы при почти всех $x\in[-1,1]$.
	Следовательно,
	\begin{displaymath}
		J=\int_{-1}^1\left(\T y{}{f,x}-f(x)\right)g(x)\Si{x}^\mu\,dx.
	\end{displaymath}
	Отсюда,
	первое равенство леммы
	вытекает в силу произвольности отрезка $[c,d]$
	и функции~$g(x)$.
	
	Второе равенство леммы доказывается аналогично.
\end{proof}

\begin{cor}
	Пусть функция~$f(x)$ имеет на каждом отрезке $[a,b]\subset(-1,1)$
	абсолютно непрерывную производную~$f'(x)$.
	Тогда для почти всех $x\in[-1,1]$
	выполнены следующие равенства
	\begin{multline*}
		\hatT t{}{f,x}-f(x)
		=\int_0^t\sincosv\\
			\times\int_0^v\hatT u{}{\Dx f,x}\sincosu\,du\,dv
	\end{multline*}
	и
	\begin{multline*}
		\hatT t{}{f,x}-\hatT{\pi/2}{}{f,x}
		=-\int_{\pi/2}^t\sincosv\\
			\times\int_v^\pi\hatT u{}{\Dx f,x}\sincosu\,du\,dv.
	\end{multline*}
\end{cor}

Первое равенство
следует сразу из первого равенства леммы~\ref{lm:tauuDx},
подстановкой $\cos u$ и $\cos v$
вместо~$u$ и~$v$ соответственно.
Аналогично этому,
второе равенство следует из второго равенства леммы~\ref{lm:tauuDx}.

\begin{lmm}\label{lm:bernshtein-markov}
	Пусть~$P_n(x)$ --- алгебраический многочлен степени не выше,
	чем $n-1$,
	$\allp$, $\rho\ge0$;
	\begin{alignat*}2
		\alpha &>-\frac1p	&\quad &\text{при $1\le p<\infty$},\\
		\alpha &\ge0 	 	&\quad &\text{при $p=\infty$}.
	\end{alignat*}
	Тогда справедливы неравенства
	\begin{gather*}
		\|P'_n(x)\|_{p,\alpha+1/2}
		\le\Cn n\norm{P_n},\\
			\norm{P_n}
		\le\Cn n^{2\rho}\|P_n\|_{p,\alpha+\rho},
	\end{gather*}
	где постоянные~$\prevC$ и~$\lastC$
	не зависят от~$n$ $(n\in\numN)$.
\end{lmm}

Лемма~\ref{lm:bernshtein-markov}
доказана в работе~\cite{potapov:vestnik-60a}.

\begin{lmm}\label{lm:E-D}
	Пусть даны числа~$p$, $\alpha$ и~$\mu$ такие,
	что $\allp$,
	$\allmu$;
	\begin{alignat*}2
		-\frac12 	&<\alpha\le\mu
			&\quad &\text{при $p=1$},\\
		-\frac1{2p}	&<\alpha<\mu+\frac12-\frac1{2p}
			&\quad &\text{при $1<p<\infty$},\\
		0			&\le\alpha<\mu+\frac12
			&\quad &\text{при $p=\infty$}.
	\end{alignat*}
	Пусть $f\in\AD$.
	Тогда справедливо неравенство
	\begin{displaymath}
		\E\le C\frac1{n^2}\norm{\Dx f(x)},
	\end{displaymath}
	где постоянная~$C$ не зависит от~$f$, $n$ и~$\mu$.
\end{lmm}

\begin{proof}
	Для фиксированного натурального числа $q>\mu$
	выберем натуральное число~$n$ такое, что
	\begin{displaymath}
		\frac{n-1}{q+2}<m<\frac{n-1}{q+2}+1.
	\end{displaymath}
	
	Нетрудно доказать,
	что при условиях леммы имеем $f\in\Lmu$.
	В работе~\cite{potapov:trudy-74} доказано,
	что функция
	\begin{displaymath}
		Q(x)=\frac1{\gamma_m}\int_0^\pi T_{\cos t}(f,x,\mu)\krn\,dt,
	\end{displaymath}
	где
	\begin{displaymath}
		\gamma_m=\int_0^\pi\krn\,dt,
	\end{displaymath}
	есть алгебраический многочлен степени не выше, чем $n-1$.
	Поэтому, применяя обобщенное неравенство Минковского,
	имеем
	\begin{multline*}
		\E\le\norm{f-Q}\\
		\le\frac1{\gamma_m}\int_0^\pi
		\norm{T_{\cos t}(f,x,\mu)-f(x)}\krn\,dt.
	\end{multline*}
	В работе~\cite[с.47]{potapov:vestnik-83}
	доказано, что для всех~$t$ справедливо неравенство
	\begin{displaymath}
		\norm{T_{\cos t}(f,x,\mu)-f(x)}\le\Cn t^2\norm{\Dx f(x)}.
	\end{displaymath}
	Поэтому
	\begin{displaymath}
		\E
		\le\lastC\norm{\Dx f(x)}\frac1{\gamma_m}
			\int_0^\pi t^2\krn\,dt.
	\end{displaymath}
	Применяя стандартную оценку ядра Джексона,
	получаем, что
	\begin{displaymath}
		\E\le\Cn\frac1{m^2}\norm{\Dx f(x)}
		\le\Cn\frac1{n^2}\norm{\Dx f(x)}.
	\end{displaymath}
	
	Лемма~\ref{lm:E-D} доказана.
\end{proof}

\section{Основные утверждения}

\begin{thm}\label{th:w-K}
	Пусть даны числа~$p$, $\alpha$ и~$\mu$ такие,
	что $\allp$,
	$\allmu$;
	\begin{alignat*}2
		-\frac12 	&<\alpha-\frac{\mu}2\le0
			&\quad &\text{при $p=1$},\\
		-\frac1{2p} &<\alpha-\frac{\mu}2<\frac12-\frac1{2p}
			&\quad &\text{при $1<p<\infty$},\\
		0 			&\le\alpha-\frac{\mu}2<\frac12
			&\quad &\text{при $p=\infty$}.
	\end{alignat*}
	Пусть $f\in\Lp$.
	Тогда для всех $\delta\in(0,\pi)$
	имеют место неравенства
	\begin{displaymath}
		\Cn\K\le\w\le\Cn\frac1{\Co\delta}\K,
	\end{displaymath}
	где положительные постоянные~$\prevC$ и~$\lastC$
	не зависят от~$f$, $\delta$ и~$\mu$.
\end{thm}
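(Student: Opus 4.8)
The plan is to prove the two estimates separately; both rest on Lemma~\ref{lm:bound-tau} together with the integral representations of Lemma~\ref{lm:tauuDx} and its corollary, and the hypotheses on $p$, $\alpha$, $\mu$ are precisely those under which Lemma~\ref{lm:bound-tau} is available. For the left inequality $C\K\le\w$ I would exhibit a good competitor in~$\AD$. Set
\begin{displaymath}
  \gd=\frac1{\kap}\int_0^\delta\sincosv\int_0^v\sincosu\,\hatT u{}{f,x}\,du\,dv,
\end{displaymath}
where $\kap$ is the same iterated integral of the weights with $\hatT u{}{f,x}$ replaced by~$1$; the closed form $\int_0^v\sincosu\,du=\frac{1-(\cos v/2)^{4\mu+2}}{2\mu+1}$ shows that $\kap\ge C\delta^2$ throughout $(0,\pi)$.

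Because $\hatT u{}{f,x}=f(x)$ at $u=0$, the difference $f-\gd$ is the same weighted average of the differences $\hatT u{}{f,x}-f(x)$ with $u\le\delta$, so $\norm{f-\gd}\le\w$ immediately. The weights are chosen so that both integrations telescope: by the translation form of Lemma~\ref{lm:Dtau} one has $\sincosu\,\Dx\hatT u{}{f,x}=\partial_u\bigl(\sincosu\,\partial_u\hatT u{}{f,x}\bigr)$, so the inner integral collapses to $\sincosu\big|_{u=v}\,\partial_v\hatT v{}{f,x}$, and this inner weight at~$v$ cancels the outer weight $\sincosv$, collapsing the $v$-integral as well. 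Hence $\Dx\gd=\kap^{-1}\bigl(\hatT\delta{}{f,x}-f(x)\bigr)$, so $\delta^2\norm{\Dx\gd}\le(\delta^2/\kap)\,\w\le C\w$, and $\K\le\norm{f-\gd}+\delta^2\norm{\Dx\gd}\le C\w$. The routine points are that $\gd\in\AD$ (Lemma~\ref{lm:bound-tau} gives $\Dx\gd\in\Lp$) and that the telescoped endpoint at $u=0$ vanishes, which holds because $\hatT u{}{f,x}$ is smooth and even in~$u$.

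For the right inequality I fix an arbitrary $g\in\AD$, write $f=(f-g)+g$, and pass to the infimum over~$g$ at the end. By Lemma~\ref{lm:bound-tau} and $|t|\le\delta$ one gets $\norm{\hatT t{}{f-g,x}-(f-g)(x)}\le C\frac1{\Co\delta}\norm{f-g}$, supplying the $\norm{f-g}$ term. It remains to bound $\sup_{|t|\le\delta}\norm{\hatT t{}{g,x}-g(x)}$ by $C\frac{\delta^2}{\Co\delta}\norm{\Dx g}$. Inserting the first representation of the corollary and estimating $\hatT u{}{\Dx g,x}$ by Lemma~\ref{lm:bound-tau} turns the weight $\sincosu$ into $\varsincosu$, whose primitive is $\int_0^v\varsincosu\,du=\frac{1-(\cos v/2)^{2\mu+2}}{\mu+1}$; for $|t|\le\pi/2$ the bound $1-(\cos v/2)^{2\mu+2}\le(\mu+1)\sin^2\frac v2$ together with $\int_0^t\sin\frac v2\,dv\le\frac{\delta^2}4$ gives the clean estimate $C\delta^2\norm{\Dx g}$.

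The main obstacle is the range $\pi/2\le|t|\le\delta$ with $\delta$ near~$\pi$. There the first representation is fatal: the inner integral above tends to the nonzero constant $\frac1{\mu+1}$ as $v\to\pi$, so against $\sincosv\sim(\cos v/2)^{-4\mu-1}$ the $v$-integral diverges like $(\cos\delta/2)^{-4\mu}$, one power worse than the permitted $\frac1{\Co\delta}=(\cos\delta/2)^{-2\mu}$. I would instead use the second representation, anchored at $\pi/2$: its inner integral $\int_v^\pi\varsincosu\,du=\frac{(\cos v/2)^{2\mu+2}}{\mu+1}$ \emph{vanishes} at the far endpoint, and against $\sincosv$ (whose factor $(\sin v/2)^{-1}$ is bounded on $[\pi/2,\pi]$) one is left to integrate $(\cos v/2)^{-2\mu+1}$, which yields $\norm{\hatT t{}{g,x}-\hatT{\pi/2}{}{g,x}}\le C\frac1{\Co\delta}\norm{\Dx g}$ (for $\mu\le1$ only a logarithm appears, still dominated by $\frac1{\Co\delta}$). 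The leftover fixed difference $\norm{\hatT{\pi/2}{}{g,x}-g(x)}\le C\norm{\Dx g}$ is handled by the first representation on $[0,\pi/2]$, where no weight degenerates; and since $\delta\ge\pi/2$ forces $\frac{\delta^2}{\Co\delta}\ge C$, each piece is $\le C\frac{\delta^2}{\Co\delta}\norm{\Dx g}$. Taking the infimum over $g\in\AD$ then gives $\w\le C\frac1{\Co\delta}\K$. Isolating this near-$\pi$ degeneration through the two anchored representations is the heart of the argument.
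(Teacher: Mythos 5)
Your proof of the right-hand inequality $\w\le C\frac1{\Co\delta}\K$ follows the paper's route exactly: the triangle inequality with an arbitrary $g\in\AD$, Lemma~\ref{lm:bound-tau} for the $f-g$ part, the first representation from the corollary to Lemma~\ref{lm:tauuDx} on $[0,\pi/2]$, and the second one, anchored at $\pi/2$, for $t\in[\pi/2,\pi)$. Your diagnosis of why the first representation degenerates near $\pi$ and the computation $\int_v^\pi\varsincosu\,du=(\mu+1)^{-1}(\cos v/2)^{2\mu+2}$ are precisely what the paper does, so this half is fine.

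For the left-hand inequality there is a genuine gap. You obtain $\Dx\gd=\kap^{-1}\bigl(\hatT\delta{}{f,x}-f(x)\bigr)$ by telescoping the commutation identity $\Dx\hatT u{}{f,x}=D_{u,0,2\mu}\hatT u{}{f,x}$ applied to $f$ itself. But Lemma~\ref{lm:Dtau}, the only source of that identity, requires $f$ to have a locally absolutely continuous derivative with $\Dx f\in\Lmu$; a general $f\in\Lp$ satisfies neither, and since $\hatT u{}{f,x}$ is a one-dimensional average it does not smooth $f$, so $\Dx\hatT u{}{f,x}$ need not even exist. The paper closes exactly this gap by introducing the explicit second antiderivative
\begin{displaymath}
	g(x)=-\int_0^x\Si{y}^{-\mu-1}\int_y^1\Si{z}^\mu
		\Bigl(f(z)-\frac{c_1}{c_0}\Bigr)\,dz\,dy,
\end{displaymath}
where $c_1/c_0$ is the mean of $f$ with respect to the weight $\Si z^\mu$; this $g$ lies in $\AD$ and satisfies $\Dx g=f-c_1/c_0$. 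Rewriting $\gd$ through $\hatT u{}{\Dx g,x}$ and applying Lemma~\ref{lm:tauuDx} to $g$ (which does have the required smoothness) gives $\gd=\kap^{-1}\bigl(\hatT\delta{}{g,x}-g(x)\bigr)+c_1/c_0$, and only then is $\Dx$ applied via Lemma~\ref{lm:Dtau}. This representation is also what establishes $\gd\in\AD$ (local absolute continuity of the derivative of $\gd$), which you dismiss as routine but which does not follow from $\Dx\gd\in\Lp$ alone. The surrounding estimates ($\norm{f-\gd}\le\w$ and $\kap\ge C\delta^2$) are as in the paper, except that the paper proves $\kap\ge C\delta^2$ only for $\delta\le\pi/2$ and handles $\pi/2\le\delta<\pi$ by a separate crude bound via $\wpar{f,1}\le\w$.
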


\begin{proof}
	Покажем,
	что для любой функции $g(x)\in\AD$
	и любого $t\in(-\pi,\pi)$
	справедливо неравенство
	\begin{equation}\label{eq:Dl-Dx}
		\norm{\hatT t{}{g,x}-g(x)}
		\le\Cn\frac1{\Co t}t^2\norm{\Dx g(x)},
	\end{equation}
	где постоянная~$\lastC$ не зависит от~$g$, $t$ и~$\mu$.
	
	Пусть $0<t\le\frac\pi2$.
	Тогда по следствию из леммы~\ref{lm:tauuDx},
	применяя обобщенное неравенство Минковского
	и лемму~\ref{lm:bound-tau},
	получим
	\begin{multline*}
		I_1=\norm{\hatT t{}{g,x}-g(x)}
		\le\int_0^t\sincosv\\
			\times\int_0^v\norm{\hatT u{}{\Dx g,x}}\sincosu\,du\,dv\\
		\le\Cn\norm{\Dx g(x)}\int_0^t\sincosv\\
			\times\int_0^v\varsincosu\,du\,dv.
	\end{multline*}
	Отсюда, учитывая,
	что при $0<t\le\frac\pi2$
	имеем
	\begin{displaymath}
		\int_0^t\sincosv\int_0^v\varsincosu\,du\,dv\le\Cn t^2,
	\end{displaymath}
	получаем, что
	\begin{displaymath}
		I_1\le\Cn t^2\norm{\Dx g(x)}
		\le\lastC\frac1{\Co t}t^2\norm{\Dx g(x)}.
	\end{displaymath}
	
	Пусть $\frac\pi2\le t<\pi$.
	Тогда по следствию из леммы~\ref{lm:tauuDx},
	применяя обобщенное неравенство Минковского,
	потом лемму~\ref{lm:bound-tau},
	получаем, что
	\begin{multline*}
		I_2=\norm{\hatT t{}{g,x}-\hatT {\pi/2}{}{g,x}}\\
		\le\Cn\norm{\Dx g(x)}\int_{\pi/2}^t\sincosv\\
			\times\int_v^\pi\varsincosu\,du\,dv.
	\end{multline*}
	Учитывая, что для $\frac\pi2\le t<\pi$
	\begin{multline*}
		\int_{\pi/2}^t\sincosv\int_v^\pi\varsincosu\,du\,dv\\
		\le\Cn\frac1{\Co t},
	\end{multline*}
	получаем, что
	\begin{equation}\label{eq:I2-Dx}
		I_2\le\Cn\frac1{\Co t}\norm{\Dx g(x)}
		\le\lastC\frac1{\Co t}t^2\norm{\Dx g(x)}.
	\end{equation}
	Поскольку
	\begin{multline*}
		\norm{\hatT t{}{g,x}-g(x)}\\
		\le\norm{\hatT t{}{g,x}-\hatT {\pi/2}{}{g,x}}
			+\norm{\hatT {\pi/2}{}{g,x}-g(x)},
	\end{multline*}
	то, применяя неравенство~\eqref{eq:I2-Dx}
	и уже доказанные
	для $0<t\le\frac\pi2$
	неравенства~\eqref{eq:Dl-Dx},
	получаем, что для $\frac\pi2\le t<\pi$
	\begin{displaymath}
		\norm{\hatT t{}{g,x}-g(x)}
		\le\Cn\frac1{\Co t}t^2\norm{\Dx g(x)}.
	\end{displaymath}
	
	Таким образом,
	неравенство~\eqref{eq:Dl-Dx}
	доказано при $0<t<\pi$.
	
	Так как, $\T{\cos t}{}{g,x}=\T{\cos(-t)}{}{g,x}$,
	то можно считать,
	что неравенство~\eqref{eq:Dl-Dx}
	справедливо для $0<|t|<\pi$.
	
	Пусть теперь $f\in\Lp$ и $0<|t|\le\delta<\pi$.
	Тогда для любой функции $g(x)\in\AD$,
	применяя лемму~\ref{lm:properties-tau},
	имеем
	\begin{multline*}
		\norm{\hatT t{}{f,x}-f(x)}\\
		\le\norm{\hatT t{}{f-g,x}}
			+\norm{\hatT t{}{g,x}-g(x)}+\norm{g-f}.
	\end{multline*}
	Применяя лемму~\ref{lm:bound-tau}
	и неравенство~\eqref{eq:Dl-Dx},
	получаем, что
	\begin{displaymath}
		\norm{\hatT t{}{f,x}-f(x)}
		\le\Cn\frac1{\Co t}
			\left(\norm{f-g}+t^2\norm{\Dx g(x)}\right),
	\end{displaymath}
	где постоянная~$\lastC$ не зависит от~$f$, $g$, $t$ и~$\mu$.
	Следовательно,
	переходя к точной нижней грани в этом неравенстве
	при $|t|\le\delta$ и $g(x)\in\AD$,
	получаем правое неравенство теоремы
	для $0<\delta<\pi$.
	
	Для доказательства левого неравенства
	рассмотрим функцию
	\begin{multline*}
		\gd=\frac1{\kap}\int_0^\delta\sincosv\\
			\times\int_0^v\hatT u{}{f,x}\sincosu\,du\,dv,
	\end{multline*}
	где
	\begin{displaymath}
		\kap=\int_0^\delta\sincosv\int_0^v\sincosu\,du\,dv.
	\end{displaymath}
	
	Применяя обобщенное неравенство Минковского
	и лемму~\ref{lm:bound-tau},
	получаем, что
	\begin{displaymath}
		\norm{\gd}\le\Cn\frac1{\Co\delta}\norm f,
	\end{displaymath}
	т.е. $\gd\in\Lp$.
	
	Пусть $0<\delta\le\frac\pi2$.
	Тогда легко показать, что
	\begin{equation}\label{eq:kap-delta}
		\kap\ge\Cn\delta^2,
	\end{equation}
	где постоянняая~$\lastC$ не зависит от~$\delta$ и~$\mu$.
	
	Нетрудно показать, что при условиях теоремы,
	из $f\in\Lp$
	следует $f\in\Lmu$.
	
	Обозначим
	\begin{displaymath}
		g(x)=-\int_0^x\Si{y}^{-\mu-1}\int_y^1\Si{z}^\mu
			\left(f(z)-\frac{c_1}{c_0}\right)\,dz\,dy,
	\end{displaymath}
	где $c_1=\int_{-1}^1f(z)\Si{z}^\mu\,dz$,
	$c_0=\int_{-1}^1\Si{z}^\mu\,dz$.
	Ясно, что $g(x)\in\AD$.
	
	Поскольку
	\begin{displaymath}
		\Dx g(x)=f(x)-\frac{c_1}{c_0},
	\end{displaymath}
	то
	\begin{multline*}
		\gd=\frac1{\kap}\int_0^\delta\sincosv\\
			\times\int_0^v\hatT u{}{\Dx g,x}
			\sincosu\,du\,dv+\frac{c_1}{c_0}.
	\end{multline*}
	Применяя следствие из леммы~\ref{lm:tauuDx},
	получаем, что
	\begin{displaymath}
		\gd=\frac1{\kap}\left(\hatT\delta{}{g,x}-g(x)\right)
			+\frac{c_1}{c_0}.
	\end{displaymath}
	Применяя к этому равенству
	оператор $\Dx$ и лемму~\ref{lm:Dtau},
	находим
	\begin{multline*}
		\Dx\gd=\frac1{\kap}\left(\hatT\delta{}{\Dx g,x}-\Dx g(x)\right)\\
		=\frac1{\kap}\left(\hatT\delta{}{f,x}-f(x)\right).
	\end{multline*}
	Следовательно,
	применяя лемы~\ref{lm:bound-tau} и~\ref{lm:Dtau},
	заключаем, что $\gd\in\AD$.
	
	Из последнего равенства и неравенства~\eqref{eq:kap-delta},
	получаем, что
	\begin{displaymath}
		\norm{\Dx\gd}
		\le\Cn\frac1{\delta^2}\norm{\hatT\delta{}{f,x}-f(x)},
	\end{displaymath}
	откуда следует, что
	\begin{displaymath}
		\norm{\Dx\gd}\le\lastC\frac1{\delta^2}\w.
	\end{displaymath}
	
	С другой страны,
	применяя обобщенное неравенство Минковского,
	получаем, что
	\begin{multline*}
		\norm{f(x)-\gd}
		\le\frac1{\kap}\int_0^\delta\sincosv\\
			\times\int_0^v\norm{f(x)-\hatT u{}{f,x}}\sincosu\,du\,dv
		\le\w.
	\end{multline*}
	
	Таким образом, для $0<\delta\le\frac\pi2$
	показано, что
	\begin{displaymath}
		I_\delta=\norm{f(x)-\gd}+\delta^2\norm{\Dx\gd}\le\Cn\w.
	\end{displaymath}
	
	Но для $\frac\pi2\le\delta<\pi$
	имеем
	\begin{multline*}
		I_\delta\le\pi^2\bigl(\norm{f(x)-\gd}+\norm{\Dx\gd}\bigr)\\
		\le\Cn\wpar{f,1}\le\lastC\w,
	\end{multline*}
	и следовательно,
	левое неравенство теоремы справедливо при всех
	$0<\delta<\pi$.
	
	Для $\delta=0$ утверждение теоремы тривиально.
	
	Теорема~\ref{th:w-K} полностью доказана.
\end{proof}

\begin{proof}[Доказательство теоремы~\ref{th:jackson}]
	\setcounter{const}2
	Для любой функции $g(x)\in\AD$,
	применяя лемму~\ref{lm:E-D},
	имеем
	\begin{displaymath}
		\E\le\Epar n{f-g}+\Epar ng
		\le\norm{f-g}+\Cn\frac1{n^2}\norm{\Dx g(x)},
	\end{displaymath}
	где постоянная~$\lastC$ не зависит от~$g$ и~$n$.
	Отсюда,
	переходя к точной нижней грани по всем $g(x)\in\AD$
	и потом применяя теорему~\ref{th:w-K},
	получим
	\begin{displaymath}
		\E\le\Cn\Kpar{f,\frac1n}\le\Cn\wpar{f,\frac1n}.
	\end{displaymath}
	
	Левое неравенство теоремы доказано.
	
	Докажем правое неравенство теоремы.
	Пусть~$P_n(x)$
	--- алгебраический многочлен наилучшего приближения
	для~$f$,
	степени не выше, чем $n-1$.
	Пусть~$k$ выбрано так,
	что
	\begin{equation}\label{eq:k}
		2^k\le n<2^{k+1}.
	\end{equation}
	Из теоремы~\ref{th:w-K},
	учитывая, что $P_{2^k}(x)\in\AD$,
	следует, что
	\begin{displaymath}
		\wpar{f,\frac1n}\le\Cn\left(\norm{f-P_{2^k}}
			+\frac1{n^2}\norm{\Dx P_{2^k}(x)}\right).
	\end{displaymath}
	
	Так как,
	\begin{displaymath}
		\Dx P_{2^k}(x)
		=\sum_{\nu=0}^{k-1}\Dx
			\left(P_{2^{\nu+1}}(x)-P_{2^\nu}(x)\right),
	\end{displaymath}
	учитывая,
	что из леммы~\ref{lm:bernshtein-markov}
	следует, что
	\begin{multline*}
		\norm{\Dx P_n(x)}\le\norm{\Si{x}P''_n(x)}
			+(2\mu+2)\norm{P'_n(x)}\\
		\le\Cn n^2\norm{P_n},
	\end{multline*}
	получаем
	\begin{multline*}
		\norm{\Dx P_{2^k}(x)}
		\le\Cn\sum_{\nu=0}^{k-1}2^{2(\nu+1)}
			\norm{P_{2^{\nu+1}}-P_{2^\nu}}\\
		\le2\lastC\sum_{\nu=0}^{k-1}2^{2(\nu+1)}\Epar{2^\nu}f.
	\end{multline*}
	Поэтому, учитывая неравенство~\eqref{eq:k},
	имеем
	\begin{displaymath}
		\wpar{f,\frac1n}
		\le\Cn\frac1{n^2}
			\sum_{\nu=0}^k 2^{2(\nu+1)}\Epar{2^\nu}f.
	\end{displaymath}
	
	Замечая, что для $\nu=1,\dots,k$
	\begin{displaymath}
		\sum_{j=2^{\nu-1}}^{2^\nu-1}j\Epar j f
		\ge2^{2(\nu-1)}\Epar{2^\nu}f,
	\end{displaymath}
	находим
	\begin{multline*}
		\wpar{f,\frac1n}
		\le\Cn\frac1{n^2}\biggl(4\Epar1f+\sum_{\nu=1}^k
			\sum_{j=2^{\nu-1}}^{2^\nu-1}j\Epar j f\biggr)\\
		\le\Cn\frac1{n^2}\sum_{\nu=1}^n\nu\Epar\nu f.
	\end{multline*}
	
	Теорема~\ref{th:jackson} доказана.
\end{proof}

\end{document}